\newcommand{\gf}{{\mathbb{GF}}}
\newcommand{\C}{{\mathcal C}}
\newtheorem{theorem}{Theorem}[section]
\newtheorem{lemma}[theorem]{Lemma}
\newtheorem{open}[theorem]{Open Problem}
\journal{Finite Fields and Their Applications}
\begin{document}

\begin{frontmatter}



\title{On an open problem about a class of optimal ternary cyclic codes}
\author[SWJTU]{Dongchun Han}
 \ead{han-qingfeng@163.com}
\author[SWJTU]{Haode Yan\corref{cor1}}
 \ead{hdyan@swjtu.edu.cn}

 \cortext[cor1]{Corresponding author}
 \address[SWJTU]{School of Mathematics, Southwest Jiaotong University, Chengdu, 610031, China}
\tnotetext[fn1]{D. Han's research was supported by the National Science Foundation of China Grant No.11601448 and the Fundamental Research Funds for the Central Universities of China under Grant 2682016CX121. H. Yan's research was supported by the National Natural Science Foundation of
China Grant No.11801468 and the Fundamental Research Funds for the Central Universities of China under Grant 2682018CX61.}


%

\begin{abstract}
Cyclic codes are a subclass of linear codes and have applications in consumer electronics, data storage systems and communication systems as they have efficient encoding and decoding algorithms. In this paper, we settle an open problem about a class of optimal ternary cyclic codes which was proposed by Ding and Helleseth \cite{Ding-Heelseth}. Let $\C_{(1,e)}$ be a cyclic code of length $3^m-1$ over $\gf(3)$ with two nonzeros $\alpha$ and $\alpha^e$, where $\alpha$ is a generator of $\gf(3^m)^*$ and $e$ is a given integer. It is shown that $\C_{(1,e)}$ is optimal with parameters $[3^m-1,3^m-1-2m,4]$ if one of the following conditions is met. 1) $m\equiv0(\mathrm{mod}~  4)$, $m\geq 4$, and $e=3^\frac{m}{2}+5$. 2) $m\equiv2(\mathrm{mod}~  4)$, $m\geq 6$, and $e=3^\frac{m+2}{2}+5$.

\end{abstract}

\begin{keyword}

Cyclic code\sep optimal code \sep ternary code\sep  Sphere Packing bound.
\MSC  94B15\sep 11T71

\end{keyword}

\end{frontmatter}

\begin{abstract}
Cyclic codes are a subclass of linear codes and have applications in consumer electronics,
data storage systems, and communication systems as they have efficient encoding and
decoding algorithms. In this paper, a conjecture proposed by Ding and Helleseth in 2013 about a class of optimal ternary cyclic codes $\C_{(1,e)}$ for $e=3^h+5, 2\leq h\leq m-1$ with parameters $[3^m-1,3^m-1-2m,4]$ is settled, if one of the following conditions is met:
\begin{enumerate}
\item $m\equiv0(\mod 4)$, $m\geq 4$, and $h=\frac{m}{2}$.
\item $m\equiv2(\mod 4)$, $m\geq 6$, and $h=\frac{m+2}{2}$.
\end{enumerate}
\end{abstract}


\section{Introduction}

Cyclic codes are an important subclass of linear codes and have been extensively studied \cite{Klove}. Let $p$ be a prime, $m$ be a positive integer. Let $\gf(p)$ and $\gf(p^m)$ denote the finite fields with $p$ and $p^m$ elements, respectively. A linear $[n,k,d]$ code $\C$ over the finite field $\gf(p)$ is a $k$-dimensional subspace of $\gf(p)^n$ with minimum Hamming distance $d$, and is called {cyclic} if any cyclic shift of a codeword
is another codeword of $\C$. Let $\gcd(n,p)=1.$ By identifying any vector $(c_0,c_1,\cdots,c_{n-1})\in \gf(p)^n$ with
 \[c_0+c_1x+c_2x^2+\cdots+c_{n-1}x^{n-1}\in\gf(p)[x]/(x^n-1),\]
 any cyclic code of length $n$ over $\gf(p)$ corresponds to an ideal of the polynomial residue class ring $\gf(p)[x]/(x^n-1)$.
 It is well known that every ideal of $\gf(p)[x]/(x^n-1)$ is principal. Any cyclic code $\C$ can be expressed as $\C=\langle g(x) \rangle$, where $g(x)$ is monic and has the least degree. Then $g(x)$ is called the {generator polynomial} and $h(x)=(x^n-1)/g(x)$ is referred to as the { parity-check polynomial} of $\C$. For some recent developments of cyclic codes, the readers are referred to \cite{CDY05}, \cite{Ding12}-\cite{Ding-Heelseth}, \cite{FLZ}, \cite{TFeng}, \cite{Li-CJ}-\cite{LiN},  \cite{Schmidt}-\cite{YangJing}, \cite{ZhengDB}-\cite{ZhouDingTC}  and the references therein.

Let $\alpha$ be a generator of $\gf(3^m)^*=\gf(3^m)\setminus\{0\}$ and $m_{i}(x)$ be
the minimal polynomial of $\alpha^i$ over $\gf(3)$, where $1\leq i\leq 3^m-1$.
Let $\C_{(1,e)}$ be the cyclic code over $\gf(3)$  with generator polynomial $m_1(x)m_e(x)$, where $e$ is an integer such that
$\alpha$ and $\alpha^e$ are nonconjugate. Carlet, Ding and Yuan \cite{CDY05} proved that $\C_{(1,e)}$ has parameters $[3^m-1,3^m-1-2m,4]$ when $x^e$ are certain perfect nonlinear monomials over $\gf(3^m)$. Notice that the ternary cyclic code with parameters $[3^m-1,3^m-1-2m,4]$ is optimal according to the Sphere Packing bound. In 2013, Ding and Helleseth \cite{Ding-Heelseth} constructed several classes of
optimal ternary cyclic codes $\C_{(1,e)}$ with parameters $[3^m-1,3^m-1-2m,4]$ by employing some monomials $x^e$ over $\gf(3^m)$ including almost perfect nonlinear monomials. In addition, nine open problems about $\C_{(1,e)}$ with parameters $[3^m-1,3^m-1-2m,4]$ were proposed in \cite{Ding-Heelseth}. Recently, two of the nine open problems were solved, see \cite{LiN, LZH}. Moreover, an open problem proposed in \cite{Ding-Heelseth} is shown as follows.

\begin{open}[Open Problem 7.12, \cite{Ding-Heelseth}]\label{open-1}
Let $e=3^h+5$, where $2\leq h\leq m-1$. Let $m$ be even. Is it true that the ternary cyclic code $\C_{(1,e)}$ has parameters  $[3^m-1,3^m-1-2m,4]$ if one of the following conditions is met?
\begin{enumerate}
\item $m\equiv0(\mathrm{mod}~  4)$, $m\geq 4$, and $h=\frac{m}{2}$.

\item $m\equiv2(\mathrm{mod}~  4)$, $m\geq 6$, and $h=\frac{m+2}{2}$.

\end{enumerate}
\end{open}

In this paper, we will settle this open problem. The rest of this paper is organized as follows. In section \ref{preliminaries}, we introduce two useful results which will
be employed in the sequel. In Section \ref{main}, we present the proof of our main result. Section \ref{conclusion} concludes the paper with some remarks.

\section{Preliminaries}\label{preliminaries}

In this section, we will introduce two useful results. The first one is about the cyclotomic coset. For a prime $p$, the $p$-cyclotomic coset modulo $p^m-1$ containing $j$ is defined as
$$\C_j=\big\{jp^s \mod(p^m-1):s=0,1,...,m-1\big\}.$$
We have the following lemma.
\begin{lemma}[Lemma 2.1, \cite{Ding-Heelseth}]\label{Lemma-size}
For any $1\leq e\leq p^m-2$ with $\gcd(e,p^m-1)=2$, the cardinality of the $p$-cyclotomic coset $\C_e$ is equal to $m$.
\end{lemma}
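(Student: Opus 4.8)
The plan is to reinterpret $|\C_e|$ as the length of a multiplicative orbit, and then to use the hypothesis $\gcd(e,p^m-1)=2$ together with an elementary size estimate to pin this length down to $m$.

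First I would note that, writing $N=p^m-1$, the coset $\C_e=\{ep^s\bmod N: s\ge 0\}$ is the orbit of $e$ under multiplication by $p$ modulo $N$; since $\gcd(p,N)=1$ this map is a bijection of $\Z/N\Z$, so the orbit is a single cycle and its length $\ell:=|\C_e|$ is the least positive integer with $ep^{\ell}\equiv e\pmod N$. Because $p^m\equiv 1\pmod N$ we have $ep^m\equiv e$; as $ep^{s}\equiv e$ holds exactly when $\ell\mid s$, this yields $\ell\mid m$. Thus $\ell\le m$, and everything reduces to ruling out $\ell<m$.

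The heart of the argument is a single divisibility manipulation. The congruence $ep^{\ell}\equiv e\pmod N$ says exactly that $N\mid e(p^{\ell}-1)$. Setting $g=\gcd(e,N)=2$ and dividing by $g$, I would use that $\gcd\!\big(\tfrac{N}{2},\tfrac{e}{2}\big)=1$ to cancel the factor $e/2$ and obtain
\[
\frac{p^m-1}{2}\ \Big|\ \bigl(p^{\ell}-1\bigr).
\]
This is the only step that genuinely consumes the hypothesis $\gcd(e,p^m-1)=2$: it converts the coset condition into a clean divisibility no longer involving $e$.

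It then remains to compare magnitudes. If $\ell<m$, then $\ell\mid m$ forces $m\ge 2\ell$, so $p^m\ge (p^{\ell})^2$ and
\[
\frac{p^m-1}{2}\ \ge\ \frac{(p^{\ell})^2-1}{2}\ =\ (p^{\ell}-1)\,\frac{p^{\ell}+1}{2}\ >\ p^{\ell}-1\ >\ 0,
\]
using $p\ge 3$. A strictly positive integer smaller than $\tfrac{p^m-1}{2}$ cannot be a multiple of it, contradicting the displayed divisibility. Hence $\ell=m$, which is the assertion. There is no real obstacle in this lemma: it is elementary, and the only points to watch are that the middle step must invoke the coprimality precisely, and that the statement is vacuous for $p=2$ (there $p^m-1$ is odd, so $\gcd(e,p^m-1)=2$ is impossible) --- consistent with the intended application $p=3$.
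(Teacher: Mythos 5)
Your proof is correct, and since the paper does not prove this lemma itself but imports it from the cited reference of Ding and Helleseth, the relevant comparison is with the standard argument there: your route---using $\gcd(e,p^m-1)=2$ to turn $ (p^m-1)\mid e(p^{\ell}-1)$ into $\frac{p^m-1}{2}\mid(p^{\ell}-1)$, then ruling out $\ell<m$ via $\ell\mid m$ and a size comparison---is essentially that same standard proof. All the delicate points (minimality of the orbit length, cancellation of $e/2$ by coprimality, positivity of $p^{\ell}-1$, vacuity for $p=2$) are handled correctly.
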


It is known that a code with parameters $[3^m-1,3^m-1-2m,4]$ is optimal according to the Sphere Packing bound. To determine the optimality of $\C_{(1,e)}$, the following sufficient and necessary conditions are
given by Ding and Helleseth in \cite{Ding-Heelseth}.

\begin{theorem}[Theorem 4.1, \cite{Ding-Heelseth}]\label{thm-fundamental}
Let $e\notin\C_1$, and $|\C_e|=m$. The ternary cyclic code $\C_{(1,e)}$ has parameters  $[3^m-1,3^m-1-2m,4]$ if and only if the following conditions are satisfied:

{\textit{C1}}: e is even;

{\textit{C2}}: the equation $(x+1)^e-x^e-1=0$ has the only solution $x=0$ in $\mathsf \gf(3^m)$;

{\textit{C3}}: the equation $(x+1)^e+x^e+1=0$ has the only solution $x=1$ in $\mathsf \gf(3^m)$.

\end{theorem}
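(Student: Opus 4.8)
The plan is to read the minimum distance of $\C_{(1,e)}$ directly off a parity-check matrix and to translate C1--C3 into the non-existence of low-weight codewords. Since $\alpha$ and $\alpha^e$ are the two nonzeros, a vector $(c_0,\dots,c_{n-1})\in\gf(3)^n$ with $n=3^m-1$ lies in $\C_{(1,e)}$ exactly when $\sum_i c_i\alpha^i=0$ and $\sum_i c_i\alpha^{ie}=0$. Thus $\C_{(1,e)}$ has the $\gf(3)$-parity-check matrix whose $i$-th column is $(\alpha^i,\alpha^{ie})^{\top}$, each entry expanded as a length-$m$ column over $\gf(3)$. Because $e\notin\C_1$ the cosets $\C_1,\C_e$ are distinct and $|\C_1|=|\C_e|=m$, so the generator polynomial $m_1(x)m_e(x)$ has degree $2m$ and the dimension is $3^m-1-2m$; the minimum distance $d$ equals the least number of $\gf(3)$-linearly dependent columns. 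I would first observe that $d\le 4$ is automatic: a ternary $[3^m-1,\,3^m-1-2m]$ code with $d\ge 5$ would violate the Sphere Packing bound, since a Hamming ball of radius $2$ has size $1+2n^2>3^{2m}$. Hence it suffices to prove that $d\ge 4$, i.e. that no codeword of weight $1$, $2$, or $3$ exists, if and only if C1, C2, and C3 all hold.

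Weight $1$ is impossible since every column is nonzero. For weight $2$ I would examine a relation $a\alpha^i+b\alpha^j=0$, $a\alpha^{ie}+b\alpha^{je}=0$ with $a,b\in\gf(3)^{*}$ and $i\ne j$. The first equation forces $\alpha^{i-j}=-b/a\in\{1,-1\}$; the value $1$ gives $i=j$, while $-1$ forces $b=a$ and $i-j=(3^m-1)/2$, whereupon the second equation reduces to $(-1)^e=-1$. So a weight-$2$ codeword exists exactly when $e$ is odd, and "no weight-$2$ codeword'' is equivalent to C1.

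For weight $3$, assuming $e$ even, I would take a relation supported on three distinct positions, normalize one coefficient to $1$, and divide both equations by a suitable power of $\alpha$. Up to a global sign and a relabeling of the three positions the coefficient sign pattern is either all equal or has exactly one odd sign, giving two cases. Setting $x$ equal to a ratio of two of the powers $\alpha^i,\alpha^j,\alpha^k$, the "one different'' case becomes $(x+1)^e-x^e-1=0$ with $x\ne 0$, the equation of C2, while the "all equal'' case, after absorbing $(-1)^e=1$, becomes $(x+1)^e+x^e+1=0$ with $x\ne 1$, the equation of C3. In each case distinctness and nonvanishing of the three positions translate into finitely many forbidden values of $x$, and I would check that the lone admissible solution predicted by the condition (namely $x=0$ for C2 and $x=1$ for C3) is precisely the excluded degenerate one, while the remaining forbidden values ($x\in\{1,2\}$ resp. $x\in\{0,2\}$) are never roots of the corresponding polynomial in characteristic $3$. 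This shows that a weight-$3$ codeword exists if and only if C2 or C3 fails.

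The main obstacle is this weight-$3$ analysis: organizing the sign patterns so that they collapse to exactly the two equations of C2 and C3, and, in the converse direction, verifying that every genuine (non-degenerate) root of $(x+1)^e-x^e-1$ or $(x+1)^e+x^e+1$ yields three distinct nonzero field elements, hence a bona fide weight-$3$ codeword. The parity of $e$ enters twice in an essential way: it kills the weight-$2$ relations (C1), and it lets $(-1)^e$ be absorbed so that the all-equal-sign case lands on the $+$ equation C3 rather than a spurious one. Combining "no codeword of weight $\le 3$ $\iff$ C1 $\wedge$ C2 $\wedge$ C3'' with the automatic bound $d\le 4$ gives $d=4$, which together with the length $3^m-1$ and dimension $3^m-1-2m$ yields the stated parameters.
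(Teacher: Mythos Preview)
The paper does not prove this theorem; it is quoted without proof from Ding and Helleseth and used as a black box in the subsequent sections. Your argument is the standard one and is correct: the parity-check description yields the length and dimension, the Sphere Packing inequality $1+2n^{2}>3^{2m}$ (valid for $m\ge 2$, since $1+2(3^{m}-1)^{2}-3^{2m}=(3^{m}-1)(3^{m}-3)$) forces $d\le 4$, and your weight-$2$ and weight-$3$ case analysis is exactly how C1--C3 arise. The bookkeeping on the excluded values is also accurate: when $e$ is even one checks directly in characteristic~$3$ that $x=1,2$ are never roots of $(x+1)^{e}-x^{e}-1$ and $x=0,2$ are never roots of $(x+1)^{e}+x^{e}+1$, so the biconditional holds in both directions.
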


\section{Solving Open Problem \ref{open-1}}\label{main}
In this section, we confirm that each condition in Open Problem \ref{open-1} satisfies all the three conditions in Theorem \ref{thm-fundamental}. Then the answer of the open problem can be deduced. Firstly, we confirm that C1 holds in the following lemma.
\begin{lemma}\label{Lemma-size1}
Let $e=3^h+5$, where $2\leq h\leq m-1$. Then $e\notin\C_1$ and $|\C_e|=m$ if one of the following conditions is met.
\begin{enumerate}
\item $m\equiv0(\mathrm{mod}~ 4)$, $m\geq 4$, and $h=\frac{m}{2}$.

\item $m\equiv2(\mathrm{mod}~ 4)$, $m\geq 6$, and $h=\frac{m+2}{2}$.

\end{enumerate}
\end{lemma}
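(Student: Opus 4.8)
The plan is to reduce both assertions to the single arithmetic claim
\[
\gcd(3^h+5,\,3^m-1)=2,
\]
and then to prove this claim by separating the prime $2$ from the odd primes. First note that $e=3^h+5$ satisfies $1\le e\le 3^m-2$ under either hypothesis (since $h\le m-1$), so $e$ is a genuine residue modulo $3^m-1$. If the displayed gcd equals $2$, then Lemma~\ref{Lemma-size} immediately yields $|\C_e|=m$. Moreover, every element of $\C_1=\{3^s\bmod(3^m-1):0\le s\le m-1\}$ is coprime to $3^m-1$, because $\gcd(3^s,3^m-1)=1$; as $\gcd(e,3^m-1)=2\neq 1$, the integer $e$ cannot coincide with any such residue, so $e\notin\C_1$. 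Thus it suffices to establish the displayed equality.

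For the $2$-part I would argue as follows. Since $m$ is even, $3^m\equiv(-1)^m\equiv 1\pmod 4$, so $4\mid 3^m-1$. In both cases $h$ is even: in case~(1), $h=\tfrac{m}{2}$ with $m\equiv 0\pmod 4$, and in case~(2), $h=\tfrac{m+2}{2}$ with $m+2\equiv 0\pmod 4$. Hence $3^h\equiv 1\pmod 4$ and $3^h+5\equiv 2\pmod 4$, so $2$ divides $3^h+5$ but $4$ does not. Consequently the exact power of $2$ dividing the gcd is $2^1$. (In particular $e$ is even, which is condition C1 of Theorem~\ref{thm-fundamental}.)

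It remains to rule out odd prime divisors, which is the real heart of the argument. Suppose an odd prime $p$ divides both $3^h+5$ and $3^m-1$, so that $3^m\equiv 1\pmod p$ and $3^h\equiv-5\pmod p$. The choice of $h$ is engineered precisely so that squaring $3^h$ lands on a controlled value. In case~(1), $2h=m$ gives $(3^h)^2=3^m\equiv 1\pmod p$, whence $3^h\equiv\pm 1\pmod p$; comparing with $3^h\equiv-5$ forces $p\mid 6$ or $p\mid 4$. In case~(2), $2h=m+2$ gives $(3^h)^2=3^{m+2}\equiv 9\pmod p$, whence $3^h\equiv\pm 3\pmod p$; comparing with $3^h\equiv-5$ forces $p\mid 8$ or $p\mid 2$. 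In every branch $p\in\{2,3\}$; the value $p=2$ contradicts $p$ being odd, and $p=3$ contradicts $p\mid 3^m-1$ because $3^m-1\equiv-1\pmod 3$. Hence no odd prime divides the gcd, and together with the $2$-part this yields $\gcd(3^h+5,3^m-1)=2$, completing the reduction. The only delicate point is this odd-prime step: everything hinges on the identities $2h=m$ and $2h=m+2$, which reduce $3^h$ modulo $p$ to $\pm1$ or $\pm3$ and leave only the harmless primes $2$ and $3$ to exclude.
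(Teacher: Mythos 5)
Your proof is correct and follows essentially the same route as the paper: both reduce the lemma to the single claim $\gcd(3^h+5,\,3^m-1)=2$, invoke Lemma~\ref{Lemma-size} for $|\C_e|=m$, and exploit the fact that $2h=m$ (resp. $2h=m+2$) allows squaring the congruence $3^h\equiv-5$ modulo a common divisor. The differences are cosmetic: the paper runs one Euclidean step, $\gcd(3^{m/2}+5,3^m-1)=\gcd\bigl(3^{m/2}+5,\,3^m-1-(3^{m/2}+5)(3^{m/2}-5)\bigr)=\gcd(3^{m/2}+5,24)$, and finishes with a computation modulo $8$ (writing out only case (1) and declaring case (2) similar), whereas you separate the prime $2$ and eliminate odd primes via the branching $3^{2h}\equiv 1$ or $9\pmod p\Rightarrow 3^h\equiv\pm1$ or $\pm3\pmod p$, which treats both cases uniformly and explicitly; likewise the paper gets $e\notin\C_1$ from the parity of $e$ while you use the coprimality of elements of $\C_1$ with $3^m-1$ --- both arguments are valid.
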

\begin{proof}
We only prove the first one and the second one is similar. It is easy to see that $e\notin\C_1$ since $e$ is even. It will be shown that $|\C_e|=m$.
We have
\begin{align*}
\gcd(e, 3^m-1)&=\gcd(3^{\frac{m}{2}}+5,3^m-1)=\gcd(3^{\frac{m}{2}}+5,3^m-1-(3^{\frac{m}{2}}+5)(3^{\frac{m}{2}}-5))\\
     &=\gcd(3^{\frac{m}{2}}+5,24)=\gcd(3^{\frac{m}{2}}+5,8)=(6,8)=2.
\end{align*}
The fifth equality holds since $m\equiv0(\mathrm{mod}~ 4)$ and $3^{\frac{m}{2}}+5 \equiv 6 (\mathrm{mod}~ 8)$. Consequently, $|\C_e|=m$ follows from lemma \ref{Lemma-size}.
\end{proof}
Secondly, we investigate the solutions of $(x+1)^e-x^e-1=0$ in $\gf(3^m)$.
\begin{lemma}\label{Lemma-C2}
Let $e=3^h+5$, where $2\leq h\leq m-1$. Then
\begin{equation}\label{eqnminus}(x+1)^e-x^e-1=0
\end{equation}
has the only solution $x=0$ in $\mathsf \gf(3^m)$ if one of the following conditions is met.
\begin{enumerate}
\item $m\equiv0(\mathrm{mod}~ 4)$, $m\geq 4$, and $h=\frac{m}{2}$.

\item $m\equiv2(\mathrm{mod}~ 4)$, $m\geq 6$, and $h=\frac{m+2}{2}$.
\end{enumerate}
\end{lemma}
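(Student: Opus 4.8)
The plan is to turn \eqref{eqnminus} into a rational identity via the Frobenius map and then to exploit the quadratic subfield $\gf(3^{m/2})$, which exists because $m$ is even. In characteristic $3$ we have $(x+1)^{3^h}=x^{3^h}+1$, so \eqref{eqnminus} is equivalent to $x^{3^h}f(x)+g(x)=0$, where $f(x)=(x+1)^5-x^5=2x^4+x^3+x^2+2x+1$ and $g(x)=(x+1)^5-1=x^5+2x^4+x^3+x^2+2x$. Since $x=0$ is already a solution, I assume $x\neq 0$ and seek a contradiction. A direct check shows that $f$ is irreducible over $\gf(3)$ and that $g(x)=x^5f(1/x)$; hence a nonzero common zero of $f$ and $g$ would be a common zero of $f$ and its reciprocal, a distinct (so coprime) irreducible quartic. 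Thus $f(x)\neq 0$, and the equation becomes $x^{3^h}=-g(x)/f(x)$.

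Next, write $q=3^{m/2}$ and let $\sigma(z)=z^{q}$; since $m$ is even, $\sigma$ is an involution of $\gf(3^m)$ with fixed field $\gf(q)$, and $3^h=q$ in case~(1), $3^h=3q$ in case~(2). I would first settle the case $x\in\gf(q)$. There $x^{3^h}$ equals $x$ in case~(1) and $x^3$ in case~(2), so \eqref{eqnminus} collapses to $(x+1)^6-x^6-1=2x^3=0$, forcing $x=0$, respectively to $(x+1)^8-x^8-1=0$. The latter polynomial equals $x$ times a palindromic sextic; the substitution $w=x+x^{-1}$ reduces it to $w^3+2w^2+w+1$, which is irreducible over $\gf(3)$, and one checks that each associated $x$ has degree $6$ over $\gf(3)$. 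As $m/2$ is odd in case~(2), $\gf(3^6)\not\subseteq\gf(q)$, so no such $x$ lies in $\gf(q)$; again only $x=0$ survives.

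The substantive case is $x\in\gf(3^m)\setminus\gf(q)$, where $x$ and $x^{q}$ are distinct conjugates over $\gf(q)$. Put $s=x+x^{q}$ and $p=x^{q+1}$, both in $\gf(q)$. Applying $\sigma$ to $x^{3^h}f(x)+g(x)=0$ produces a companion equation, and the pair is symmetric under $x\leftrightarrow x^{q}$. Taking the sum and the difference and rewriting through Newton's identities expresses both in $s,p$; after cancelling the factor $x-x^{q}$ from the difference, $p$ enters linearly in a suitable combination, and eliminating it leaves a single univariate condition. In case~(1) this condition factors as $s\,(s^3+2s^2+1)(s^6+2s^5+2s^4+s^3+2s^2+s+1)=0$ with an irreducible sextic. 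The minimal polynomial of $x$ over $\gf(q)$ is $T^2-sT+p$, which must be irreducible; but its discriminant simplifies to $s^2+2p=(s-1)(s^2+1)^2/(s^3+s+2)$, and I claim it is always a square in $\gf(q)$: at $s=0$ it equals $1$; at a root of $s^3+2s^2+1$ (which lies in $\gf(q)$ only when $\gf(27)\subseteq\gf(q)$, whence $[\gf(q):\gf(27)]$ is even by $m\equiv 0\pmod 4$, so every element of $\gf(27)$ is a square in $\gf(q)$); and at a root of the sextic a norm computation gives $N_{\gf(3^6)/\gf(3)}(s^2+2p)=1$. In each case $T^2-sT+p$ splits over $\gf(q)$, contradicting $x\notin\gf(q)$.

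The hard part is precisely this last step: carrying out the elimination and then proving, uniformly in $m$, that the discriminant $s^2+2p(s)$ is a square in $\gf(3^{m/2})$ at every root of the eliminated polynomial, so that the conjugate pair is forced to collapse into $\gf(q)$. The congruence hypotheses on $m$ enter here, through the parity of the degrees $[\gf(3^{m/2}):\gf(3^d)]$ for the relevant subfields $\gf(3^d)$. Case~(2) runs on the same scheme, but replacing $x^{q}$ by $(x^{q})^3$ raises the degree of the eliminated polynomial, and its factorization together with the matching square/nonsquare analysis is the most laborious ingredient.
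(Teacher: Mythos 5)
Your route is genuinely different from the paper's. The paper never descends to the subfield: from $\theta^{3^h}=f(\theta)/g(\theta)$ it takes $3^h$-th powers again, uses $\theta^{3^{2h}}=\theta$ (resp.\ $\theta^9$) to get one univariate equation of degree $23$ (resp.\ higher) in $\theta$, factors it by Magma, and kills each irreducible factor by a subfield/order argument (e.g.\ roots of the sextic factors lie in $\gf(3^6)$, forcing $6\mid h$ and $\theta^{3^h}=\theta$, whence $\theta=0$). You instead exploit the involution $\sigma(z)=z^q$, $q=3^{m/2}$, split into $x\in\gf(q)$ (where your reduction to $2x^3=0$, resp.\ to $\Phi_{14}(x)=x^6+2x^5+x^4+2x^3+x^2+2x+1$ with $\mathrm{ord}_{14}(3)=6$, is correct and cleaner than the paper's per-factor arguments) and $x\notin\gf(q)$, treated via $s=x+x^q$, $p=x^{q+1}$ and a discriminant/square argument. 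I checked your Case~(1) claims and they are all correct: symmetrizing $x^qf(x)+g(x)=0$ and its $\sigma$-image, dividing the difference by $x-x^q$, and substituting the resulting relation $p^2=2ps^2+s^4+2s^3+s^2+s+1$ into the sum makes the $p^2$-terms collapse and yields exactly $p(s^3+s+2)=s^4+2s^3+s^2+2s+1$; the eliminated polynomial is $2s(s^3+2s^2+1)(s^6+2s^5+2s^4+s^3+2s^2+s+1)$ with both nonlinear factors irreducible; the discriminant identity $s^2+2p=(s-1)(s^2+1)^2/(s^3+s+2)$ holds; and the norm of this quantity over $\gf(3)$ along the sextic is indeed $1$ (via $P(1)=1$, $P(\omega)P(-\omega)=1$ for $\omega^2=-1$, and $P(-1)\cdot P(\beta)P(\beta^3)=1$ for $\beta^2=\beta+1$), which by the parity bookkeeping you set up is precisely the right criterion. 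So Case~(1) is a complete, correct alternative proof, at the cost of the same kind of computer-assisted factorization the paper uses, but of a degree-$10$ polynomial in $s$ rather than degree $23$ in $\theta$.

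The genuine gap is Case~(2), which you do not carry out beyond the subfield computation. There the system becomes $(x^q)^3f(x)+g(x)=0$ and $x^3f(x^q)+g(x^q)=0$, so the symmetric functions involve $x^3y^4+x^4y^3$ and the like; the fortunate cancellation that made $p$ enter linearly in Case~(1) is not guaranteed, and you may be forced into a resultant with extraneous factors. More importantly, the square analysis changes character: with $m/2$ odd, $[\gf(q):\gf(3^d)]$ is odd for every $d\mid m/2$, so a discriminant in $\gf(3^d)$ is a square in $\gf(q)$ if and only if it is a square in $\gf(3^d)$ itself, and none of the required norm computations are exhibited; moreover the paper's Case~2 factorization shows irreducible quartic and octic factors whose roots lie in $\gf(3^4)$ and $\gf(3^8)$ and hence outside $\gf(3^m)$ when $m\equiv2\ (\mathrm{mod}\ 4)$ --- candidates your scheme must dispose of by a field-membership argument rather than a square argument, a distinction your sketch does not make. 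Since you yourself flag this ``most laborious ingredient'' as undone, the proposal proves only the first condition of the lemma; to match the paper you must either execute the elimination, factorization, and the per-factor square/membership analysis for Case~(2), or fall back on the paper's composition trick $\theta^{3^{2h}}=\theta^9$, which handles both cases uniformly.
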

\begin{proof}
It is obvious that $x=0$ is a solution of (\ref{eqnminus}) and $x=\pm 1$ is not. Suppose that $\theta\in \gf(3^m)\setminus \gf(3)$ and is a solution of (\ref{eqnminus}). Through a straight calculation, we have that
$$\theta^{3^h-1}(\theta^4-\theta^3-\theta^2+\theta-1)=\theta^4-\theta^3+\theta^2+\theta-1.$$
First, we assert that $\theta^4-\theta^3-\theta^2+\theta-1\neq0$. Otherwise, we have $\theta^4-\theta^3-\theta^2+\theta-1=\theta^4-\theta^3+\theta^2+\theta-1=0$, which leads to $\theta=0$. It is a contradiction. Hence we have
\begin{eqnarray}\label{eqn-solution1}
\theta^{3^{h}}=\frac{f(\theta)}{g(\theta)},
\end{eqnarray}
where $f(\theta)=\theta^5-\theta^4+\theta^3+\theta^2-\theta$ and $g(\theta)=\theta^4-\theta^3-\theta^2+\theta-1$.
Taking $3^{h}$ powers on both sides of the equation (\ref{eqn-solution1}), we have
\begin{equation}\label{eqn-solution2}
\theta^{3^{2h}}=\frac{\theta^{5\cdot3^h}-\theta^{4\cdot3^h}+\theta^{3\cdot3^h}+\theta^{2\cdot3^h}-\theta^{3^h}}{\theta^{4\cdot3^h}-\theta^{3\cdot3^h}-\theta^{2\cdot3^h}+\theta^{\cdot3^h}-1}.
\end{equation}
Plugging (\ref{eqn-solution1}) into (\ref{eqn-solution2}), we obtain
\begin{equation}\label{eqn-solution2.5}
\theta^{3^{2h}}=\frac{F(\theta)}{G(\theta)},
\end{equation}
where
$F(\theta)=f(\theta)^5-f(\theta)^4g(\theta)+f(\theta)^3g(\theta)^2+f(\theta)^2g(\theta)^3-f(\theta)g(\theta)^4$ and
$G(\theta)=f(\theta)^4g(\theta)-f(\theta)^3g(\theta)^2-f(\theta)^2g(\theta)^3+f(\theta)g(\theta)^4-g(\theta)^5$.
We distinguish the following two cases.

{\textit{Case 1}}: $m\equiv0(\mathrm{mod}~ 4)$, $m\geq 4$, and $h=\frac{m}{2}$.

Noting that $\theta^{3^{2h}}=\theta$ since $2h=m$, then (\ref{eqn-solution2.5}) becomes
$$\theta G(\theta) - F(\theta)=0.$$
With the help of Magam Program, we can decompose the left-hand side of the above equation into the product of some irreducible factors as follows.
$$\theta^3(\theta+1)(\theta-1)(\theta^6+\theta^3-\theta+1)(\theta^6-\theta^5+\theta^3+1)(\theta^6-\theta^5-\theta^3-\theta+1)=0.$$

If $\theta^6+\theta^3-\theta+1=0$, then $\theta \in \gf(3^6)\subseteq\gf(3^m)$. We have $6|m$ and then $6|h$ since $m=2h$ and $h$ is even. Plugging $\theta^{3^h}=\theta$ into the equation (\ref{eqn-solution1}), we have
$$\theta=\frac{\theta^5-\theta^4+\theta^3+\theta^2-\theta}{\theta^4-\theta^3-\theta^2+\theta-1},$$
which leads to $\theta=0$. It is a contradiction. Similarly, we can prove that $\theta^6-\theta^5+\theta^3+1\neq0$ and  $\theta^6-\theta^5-\theta^3-\theta+1\neq0$. Then $x=0$ is the only solution of (\ref{eqnminus}) in $\gf(3^m)$.

{\textit{Case 2}}: $m\equiv2(\mathrm{mod}~  4)$, $m\geq 6$, and $h=\frac{m+2}{2}$.

Noting that $\theta^{3^{2h}}=\theta^9$ since $2h=m+2$, then (\ref{eqn-solution2.5}) becomes
$$\theta^9 G(\theta) - F(\theta)=0.$$

With the help of Magam Program, we can decompose the left-hand side of the above equation into the product of some irreducible factors as follows.
$$\theta(\theta+1)(\theta-1)(\theta^4+\theta^3-\theta^2-\theta-1)(\theta^4+\theta^3+\theta^2-\theta-1)(\theta^6-\theta^5+\theta^4-\theta^3+\theta^2-\theta+1)$$
$$(\theta^8+\theta^7+\theta^6-\theta^4+\theta^2+\theta+1)(\theta^8+\theta^7-\theta^6-\theta^2+\theta+1)=0.$$

If  $\theta^6-\theta^5+\theta^4-\theta^3+\theta^2-\theta+1=0$, then $\theta \in \gf(3^6)\subseteq\gf(3^m)$ and $6|m$. It follows from $h$ is even that $h\equiv4(\mathrm{mod}~ 6)$. Noting that $\theta^7=-1$, we obtain $\theta^{3^h}=\theta^{3^4}=\theta^{77+4}=-\theta^4$. Plugging this into (\ref{eqn-solution1}), we obtain
$$\theta^6+\theta^5+\theta^4-\theta^3-\theta^2-\theta-1=0.$$
This together with $\theta^6-\theta^5+\theta^4-\theta^3+\theta^2-\theta+1=0$ leads to $\theta^5-\theta^2-1=\theta^5+\theta^3-\theta^2-1=0$. Then $\theta$ must be zero, which is a contradiction. Moreover,
$\theta^4+\theta^3-\theta^2-\theta-1, \theta^4+\theta^3+\theta^2-\theta-1, \theta^8+\theta^7+\theta^6-\theta^4+\theta^2+\theta+1$ and $\theta^8+\theta^7-\theta^6-\theta^2+\theta+1$ cannot be zero since $4\nmid m$. Then $x=0$ is the only solution of (\ref{eqnminus}) in $\gf(3^m)$.
This completes the proof.
\end{proof}
In what follows, we investigate the solutions of $(x+1)^e+x^e+1=0$ in $\gf(3^m)$.
\begin{lemma}\label{Lemma-C3}
Let $e=3^h+5$, where $2\leq h\leq m-1$. Then
\begin{equation}\label{eqnplus}
(x+1)^e+x^e+1=0
\end{equation} has the only solution $x=1$ in $\mathsf \gf(3^m)$ if one of the following conditions is met.
\begin{enumerate}
\item $m\equiv0(\mathrm{mod}~ 4)$, $m\geq 4$, and $h=\frac{m}{2}$.

\item $m\equiv2(\mathrm{mod}~ 4)$, $m\geq 6$, and $h=\frac{m+2}{2}$.
\end{enumerate}
\end{lemma}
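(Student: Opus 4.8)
The plan is to mirror exactly the template established in the proof of Lemma \ref{Lemma-C2}, adapting each step to the sign change in (\ref{eqnplus}). First I would dispose of the elements of $\gf(3)$ by direct substitution. Since $e$ is even (recall Lemma \ref{Lemma-size1}), $(-1)^e=1$, so $x=1$ gives $2^e+1^e+1=(-1)^e+1+1=3=0$, confirming that $x=1$ is a solution; meanwhile $x=0$ and $x=-1$ both yield $2\neq0$ and so are not solutions. Hence it suffices to show that no $\theta\in\gf(3^m)\setminus\gf(3)$ satisfies (\ref{eqnplus}).

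Assuming such a $\theta$ exists, I would invoke the Frobenius identity $(\theta+1)^{3^h}=\theta^{3^h}+1$ together with the expansion $(\theta+1)^5=\theta^5-\theta^4+\theta^3+\theta^2-\theta+1$ to rewrite (\ref{eqnplus}) as
\[
\theta^{3^h}\bigl[(\theta+1)^5+\theta^5\bigr]=-\bigl[(\theta+1)^5+1\bigr],
\]
which collapses to an identity of the shape $\theta^{3^h}=\tilde f(\theta)/\tilde g(\theta)$, the analogue of (\ref{eqn-solution1}), with $\tilde f(\theta)=\theta^5-\theta^4+\theta^3+\theta^2-\theta-1$ and $\tilde g(\theta)=\theta^5+\theta^4-\theta^3-\theta^2+\theta-1$. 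As in Lemma \ref{Lemma-C2}, I would first check that $\tilde g(\theta)\neq0$: if the bracket $(\theta+1)^5+\theta^5$ vanished, the right-hand side $(\theta+1)^5+1$ would vanish too, and subtracting the two relations forces $\theta^5=1$ and then $\theta(\theta-1)^2(\theta+1)=0$, i.e. $\theta\in\gf(3)$, a contradiction. Raising $\theta^{3^h}=\tilde f(\theta)/\tilde g(\theta)$ to the $3^h$-th power and substituting once more, exactly as in the passage from (\ref{eqn-solution1}) to (\ref{eqn-solution2.5}), yields $\theta^{3^{2h}}=\tilde F(\theta)/\tilde G(\theta)$, where $\tilde F$ and $\tilde G$ are formed from $\tilde f$ and $\tilde g$ in precisely the same combinations that $F$ and $G$ were formed from $f$ and $g$.

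I would then separate the two cases. In Case 1 ($2h=m$) the relation $\theta^{3^{2h}}=\theta$ converts the identity into the polynomial equation $\theta\,\tilde G(\theta)-\tilde F(\theta)=0$; in Case 2 ($2h=m+2$) the relation $\theta^{3^{2h}}=\theta^9$ gives $\theta^9\,\tilde G(\theta)-\tilde F(\theta)=0$. In each case I would factor the left-hand side into irreducibles over $\gf(3)$ using Magma. Any irreducible factor whose degree fails to divide $m$ (respectively, violates the parity constraint imposed by $4\mid m$ or $4\nmid m$) can be discarded at once, since such a factor has no root in $\gf(3^m)$.

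The hard part will be the remaining factors whose degree does divide $m$ — the degree-$6$ factors in particular, whose roots genuinely lie in $\gf(3^6)\subseteq\gf(3^m)$ when $6\mid m$, so that field membership alone excludes nothing. For each such surviving factor of degree $d\mid m$ I would, mirroring the $\theta^6-\theta^5+\theta^4-\theta^3+\theta^2-\theta+1$ argument of Case 2 in Lemma \ref{Lemma-C2}, determine the multiplicative order of its roots, reduce $3^h$ modulo that order (using that $h$ is even together with the divisibility $6\mid m$) to express $\theta^{3^h}$ as an explicit power of $\theta$ — as was done there via $\theta^7=-1$ giving $\theta^{3^h}=-\theta^4$ — and then substitute this expression back into the primary relation $\theta^{3^h}=\tilde f(\theta)/\tilde g(\theta)$. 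Combining the resulting equation with the defining relation of the factor should force $\theta\in\gf(3)$ or $\theta=0$, the sought contradiction. Carrying this elimination through for every surviving factor in both cases establishes that $x=1$ is the only solution of (\ref{eqnplus}) in $\gf(3^m)$.
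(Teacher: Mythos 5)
Your proposal is correct and takes essentially the same route as the paper: the paper likewise obtains $\theta^{3^h}=k(\theta)/l(\theta)$ with $k(\theta)=\theta^4+\theta^2-\theta+1$ and $l(\theta)=\theta^4-\theta^3+\theta^2+1$ (your degree-$5$ pair is just $\tilde f=(\theta-1)k$, $\tilde g=(\theta-1)l$, so your rational relation is identical after cancelling $\theta-1$, which is legitimate since $\theta\notin\gf(3)$), then squares by Frobenius, factors $\theta L(\theta)-K(\theta)$ resp.\ $\theta^9L(\theta)-K(\theta)$ by Magma, and eliminates each surviving factor by substituting an explicit power ($\theta^{3^h}=\theta$ or $\theta^{3^h}=\theta^3$) back into the primary relation. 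The only cosmetic deviations are that you keep the uncancelled quintics and that the surviving factors here turn out to be quadratic and cubic rather than sextic --- e.g.\ the paper uses $\theta^2+\theta-1=0\Rightarrow\theta^{3^h}=\theta$ (as $h$ is even) forcing $(\theta-1)^5=0$, and $\theta^3-\theta-1=0\Rightarrow 3\mid m=2h-2\Rightarrow\theta^{3^h}=\theta^3$ forcing $\theta^8=1$ and hence $\theta^2=1$ --- which is exactly the order/divisibility elimination scheme you describe.
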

\begin{proof}
It is obvious that $x=1$ is a solution of (\ref{eqnplus}). Suppose that $\theta\in\gf(3^m)\setminus \gf(3)$ is a solution of (\ref{eqnplus}). Through a straight calculation, we have
$$\theta^{3^h}(\theta^4-\theta^3+\theta^2+1)=\theta^4+\theta^2-\theta+1.$$
First, we assert that $\theta^4-\theta^3+\theta^2+1\neq0$. Otherwise, we have $\theta^4-\theta^3+\theta^2+1=\theta^4+\theta^2-\theta+1=0$. It then follows that $\theta^3=\theta$. This is contrary to the assumption that $\theta \notin \gf(3)$. Hence, we have
\begin{eqnarray}\label{eqn-solution3}
\theta^{3^{h}}=\frac{k(\theta)}{l(\theta)},
\end{eqnarray}
where $k(\theta)=\theta^4+\theta^2-\theta+1$ and $l(\theta)=\theta^4-\theta^3+\theta^2+1$. Taking $3^{h}$ powers on both sides of the equation (\ref{eqn-solution3}), we have
\begin{equation}\label{eqn-solution4}
\theta^{3^{2h}}=\frac{\theta^{4\cdot3^h}+\theta^{2\cdot3^h}-\theta^{3^h}+1}{\theta^{4\cdot3^h}-\theta^{3\cdot3^h}+\theta^{2\cdot3^h}+1},
\end{equation}
Plugging (\ref{eqn-solution3}) into (\ref{eqn-solution4}), we obtain
\[\theta^{3^{2h}}=\frac{K(\theta)}{L(\theta)},\]
where $K(\theta)=k(\theta)^4+k(\theta)^2l(\theta)^2-k(\theta)l(\theta)^3+l(\theta)^4$ and $L(\theta)=k(\theta)^4-k(\theta)^3l(\theta)+k(\theta)^2l(\theta)^2+l(\theta)^4$. We distinguish the following two cases.

{\textit{Case 1}}: $m\equiv0(\mathrm{mod}~ 4)$, $m\geq 4$, and $h=\frac{m}{2}$.

Noting that $\theta^{3^{2h}}=\theta$ since $2h=m$, then $\theta$ satisfies
$$\theta L(\theta)-K(\theta)=0.$$
With the help of Magam Program, we can decompose the left-hand side of the above equation into the product of some irreducible factors as follows.
$$(\theta-1)^5(\theta^2+\theta-1)^2(\theta^2-\theta-1)^2(\theta^2+1)^2=0.$$

If $\theta^2+\theta-1=0$, then $\theta \in \gf(3^2)$. We have $\theta^{3^2}=\theta$ and then $\theta^{3^h}=\theta$ since $h$ is even. Plugging $\theta^{3^h}=\theta$ into (\ref{eqn-solution3}), then we have
$$\theta=\frac{\theta^4+\theta^2-\theta+1}{\theta^4-\theta^3+\theta^2+1},$$
which leads to $\theta^5+\theta^4+\theta^3-\theta^2-\theta-1=(\theta-1)^5=0$, a contradiction. Similarly, we can prove that $\theta^2+\theta-1\neq0$ and $\theta^2+1\neq0$. Then $x=1$ is the only solution of (\ref{eqnplus}) in $\gf(3^m)$.

{\textit{Case 2}}: $m\equiv2(\mathrm{mod}~ 4)$, $m\geq 6$, and $h=\frac{m+2}{2}$.

Noting that $\theta^{3^{2h}}=\theta^9$ since $2h=m+2$, then $\theta$ satisfies
$$\theta^9 L(\theta)-K(\theta)=0.$$
With the help of Magam Program, we can decompose the left-hand side of the above equation into the product of some irreducible factors as follows.
$$(\theta-1)(\theta^2+1)(\theta^2+\theta-1)(\theta^2-\theta-1)(\theta^3-\theta+1)(\theta^3-\theta-1)$$
\[(\theta^3+\theta^2-\theta+1)(\theta^3-\theta^2+\theta+1)(\theta^3+\theta^2-1)(\theta^3-\theta^2+1)=0.\]
Similar with the proof of Case 1, we know that $\theta^2+1,\theta^2+\theta-1,\theta^2-\theta-1\neq0$. If $\theta^3-\theta-1=0$, then $\theta \in \gf(3^3)\subseteq \gf(3^m)$. We have $\theta^{3^3}=\theta$ and $3|m=2h-2$, this leads to $h\equiv1(\mathrm{mod}~ 3)$ and $\theta^{3^h}=\theta^3$. Plugging $\theta^{3^h}=\theta^3$ into (\ref{eqn-solution3}), we have
$$\theta^3=\frac{\theta^4+\theta^2-\theta+1}{\theta^4-\theta^3+\theta^2+1},$$
which leads to $\theta^8=1$. It follows from $\theta^{3^3-1}=1$ that $\theta^2=\theta^{(8,3^3-1)}=1$, a contradiction. This completes the proof of Case $2$.
\end{proof}

The answer to Open Problem \ref{open-1} is given in the following theorem.

\begin{theorem}
Let $e=3^h+5$, where $2\leq h\leq m-1$. Let $m$ be even. Then the ternary cyclic code $\C_{(1,e)}$ has parameters  $[3^m-1,3^m-1-2m,4]$ if one of the following conditions is met.
\begin{enumerate}
\item $m\equiv0(\mathrm{mod}~ 4)$, $m\geq 4$, and $h=\frac{m}{2}$.

\item $m\equiv2(\mathrm{mod}~ 4)$, $m\geq 6$, and $h=\frac{m+2}{2}$.

\end{enumerate}
\end{theorem}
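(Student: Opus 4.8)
The plan is to derive the theorem directly from the necessary-and-sufficient criterion of Theorem \ref{thm-fundamental}, using the three preparatory lemmas to discharge each of its requirements in turn. Theorem \ref{thm-fundamental} carries two standing assumptions, namely $e\notin\C_1$ and $|\C_e|=m$, and under these it asserts that $\C_{(1,e)}$ has the desired parameters $[3^m-1,3^m-1-2m,4]$ precisely when conditions C1, C2 and C3 all hold. Thus the entire argument reduces to checking five items, each of which has already been prepared in the preceding section.

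First I would verify the two standing hypotheses: both $e\notin\C_1$ and $|\C_e|=m$ are supplied verbatim by Lemma \ref{Lemma-size1} under either of the two numerical conditions on $m$ and $h$, which legitimizes the application of Theorem \ref{thm-fundamental}. Next I would verify the three conditions. Condition C1, that $e$ is even, is immediate: since $3^h$ is odd and $5$ is odd, $e=3^h+5$ is even for every $h$, independently of the congruence class of $m$. Condition C2, that $(x+1)^e-x^e-1=0$ has only the solution $x=0$ in $\gf(3^m)$, is exactly the content of Lemma \ref{Lemma-C2}. Condition C3, that $(x+1)^e+x^e+1=0$ has only the solution $x=1$ in $\gf(3^m)$, is exactly the content of Lemma \ref{Lemma-C3}. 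With C1, C2 and C3 all confirmed under each of the two conditions, Theorem \ref{thm-fundamental} immediately yields the stated parameters, completing the proof.

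As for where the real difficulty lies, the theorem itself is only an assembly of the lemmas, so the substantive work is entirely contained in Lemmas \ref{Lemma-C2} and \ref{Lemma-C3}, not in the final statement. The delicate point there is that raising (\ref{eqn-solution1}) (respectively (\ref{eqn-solution3})) to the $3^h$-th power and substituting produces a rational identity for $\theta^{3^{2h}}$; exploiting $2h\equiv 0$ or $2h\equiv 2 \pmod m$ then collapses this into a polynomial equation whose factorization (obtained via Magma) must be shown to have no root $\theta\in\gf(3^m)\setminus\gf(3)$. The genuine obstacle is ruling out the spurious irreducible factors: one tracks which of them could have roots in a subfield $\gf(3^d)\subseteq\gf(3^m)$, uses the resulting divisibility constraints together with the evenness of $h$ to pin down $\theta^{3^h}$ as an explicit power of $\theta$, and feeds that relation back into (\ref{eqn-solution1}) or (\ref{eqn-solution3}) to force $\theta\in\gf(3)$, a contradiction. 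Once those two lemmas are in hand, the theorem follows at once.
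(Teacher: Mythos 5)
Your proposal is correct and follows exactly the paper's route: the authors likewise prove the theorem by combining Lemma \ref{Lemma-size1} (which gives $e\notin\C_1$ and $|\C_e|=m$), Lemma \ref{Lemma-C2} (condition C2), Lemma \ref{Lemma-C3} (condition C3), and the trivially verified condition C1 with the criterion of Theorem \ref{thm-fundamental}. Your assessment that the substantive work lies in the two equation-solving lemmas, with the final theorem being pure assembly, also matches the structure of the paper.
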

\begin{proof}
The conclusions follow from Lemma \ref{Lemma-size1}, Lemma \ref{Lemma-C2}, Lemma \ref{Lemma-C3} and Theorem \ref{thm-fundamental}.
\end{proof}

\section{Conclusions}\label{conclusion}

In this paper, we settled an open problem proposed by Ding and Helleseth in 2013 about a class of optimal ternary cyclic codes. The main technique we used is shown in solving the equation in conditions C2 and C3. Assume that $\theta$ is a solution of the target equation, we can obtain $\theta^{3^h}=R(\theta)$ after calculation, where $R(\theta)$ is a rational function of $\theta$ with known degree and coefficients. Then we take $3^h$-th power of $\theta^{3^h}=R(\theta)$, together with the relationship between $m$ and $h$, we can find an solvable equation of $\theta$. We remark that when $h$ is close to $\gamma m$, where $\gamma$ is a rational number, our technique always works. For instance, the following theorem gives other optimal cyclic codes with respect to the Sphere Packing bound. This gives an incomplete answer to Open Problems 7.12-7.15 in \cite{Ding-Heelseth}.
\begin{theorem}
Let $m$ be an odd integer no less than five and $\gcd(m,3)=1$. Then the ternary cyclic code $\C_{(1,e)}$ has parameters  $[3^m-1,3^m-1-2m,4]$ if one of the following conditions is met.
\begin{enumerate}
\item $e=3^h+5$, where $2h\equiv\pm1(\mathrm{mod}~ m)$;

\item $e=3^h+13$, where $2h\equiv\pm1(\mathrm{mod}~ m)$;

\item $e=\frac{3^{m-1}}{2}+3^h+1$, where $2h\equiv\pm1(\mathrm{mod}~ m)$ or $3h\equiv\pm1(\mathrm{mod}~ m)$ or $4h\equiv\pm1(\mathrm{mod}~ m)$.
\end{enumerate}
\end{theorem}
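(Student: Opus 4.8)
The plan is to verify, for each of the three families, the three conditions C1--C3 of Theorem \ref{thm-fundamental} together with the hypotheses $e\notin\C_1$ and $|\C_e|=m$, following exactly the template set by Lemmas \ref{Lemma-size1}, \ref{Lemma-C2} and \ref{Lemma-C3}. The coset-size part is routine: first I would check that each admissible $e$ is even (this is C1), whence $e$ lies outside $\C_1=\{1,3,\dots,3^{m-1}\}$ (whose members are all odd), and then that $\gcd(e,3^m-1)=2$ by the same Euclidean reductions as in Lemma \ref{Lemma-size1}, now exploiting that $m$ is odd and $\gcd(m,3)=1$; then $|\C_e|=m$ follows from Lemma \ref{Lemma-size}. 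So the real content is conditions C2 and C3.

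For families 1 and 2, write $e=3^h+c$ with $c\in\{5,13\}$ and assume a putative solution $\theta\in\gf(3^m)\setminus\gf(3)$. First I would use the characteristic-$3$ identity $(x+1)^{3^h}=x^{3^h}+1$ to rewrite $(x+1)^e\mp x^e\mp1=0$, after isolating $\theta^{3^h}$, in the form $\theta^{3^h}=R(\theta)$ for an explicit rational function $R\in\gf(3)(\theta)$, the denominator being shown nonzero exactly as the polynomials $g(\theta)$ and $l(\theta)$ are in the lemmas. Applying the Frobenius $x\mapsto x^{3^h}$ once more and substituting gives $\theta^{3^{2h}}=R(R(\theta))$. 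The congruence hypothesis now enters decisively: because $2h\equiv\pm1\pmod m$ and $\theta^{3^m}=\theta$, we have $\theta^{3^{2h}}=\theta^{3}$ (sign $+$) or $\theta^{3^{2h}}=\theta^{3^{m-1}}$ (sign $-$), the latter being equivalent after cubing to $R(R(\theta))^3=\theta$. Either way one obtains a single polynomial equation over $\gf(3)$ that is independent of $h$, which I would factor into irreducibles with a computer algebra system, just as in the two lemmas.

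The decisive simplification is that a root of an irreducible factor of degree $d$ lies in $\gf(3^m)$ if and only if $d\mid m$. Since $m$ is odd, every even-degree factor is discarded automatically, and since $\gcd(m,3)=1$, every factor whose degree is divisible by $3$ is discarded as well; the degree-$1$ factors correspond to elements of $\gf(3)$, which are treated directly. For each surviving irreducible factor of odd degree $d\ge5$ with $d\mid m$, I would determine $h\bmod d$ from $2h\equiv\pm1\pmod d$ (legitimate since $2$ is invertible modulo the odd number $d$), compute $\theta^{3^h}=\theta^{3^{(h\bmod d)}}$, substitute back into $\theta^{3^h}=R(\theta)$, and combine the resulting relation with the factor to force $\theta\in\gf(3)$, a contradiction. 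This back-substitution is where the argument is genuinely case-by-case and is the step I expect to be the main obstacle, since it must be carried out for every surviving factor and depends on the precise factorization produced by each $c$ and each sign.

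Family 3 is handled in the same spirit but with two extra wrinkles. The half-exponent $\tfrac{3^m-1}{2}$ contributes the quadratic character $\theta^{(3^m-1)/2}\in\{1,-1\}$, so $\theta^e$ acquires a sign $\varepsilon$ according to whether $\theta$ (and, after expansion, $\theta+1$) is a square; the C2/C3 analysis then splits into subcases by these signs, each again reducing to $\theta^{3^h}=R_\varepsilon(\theta)$. Moreover the admissible congruences are $2h\equiv\pm1$, $3h\equiv\pm1$ or $4h\equiv\pm1\pmod m$, so the Frobenius power one iterates to is $3^{2h}$, $3^{3h}$ or $3^{4h}$, each collapsing to $\theta^{3^{\pm1}}$ under the corresponding congruence and yielding a finite list of fixed polynomials to factor. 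The obstacle here is controlling the larger bookkeeping---several signs times several congruences times several surviving factors---even though the underlying mechanism is identical to that of families 1 and 2.
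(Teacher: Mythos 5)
Your plan for families 1 and 2 is, as far as it goes, exactly the technique the paper uses (and, it should be said, the paper never actually writes out a proof of this theorem: it appears in the Conclusions as an unproved illustration of the method of Lemmas \ref{Lemma-C2} and \ref{Lemma-C3}). Isolating $\theta^{3^h}=R(\theta)$, applying Frobenius to get $\theta^{3^{2h}}=R(R(\theta))$, collapsing via $2h\equiv\pm1\ (\mathrm{mod}\ m)$ to $\theta^{3}=R(R(\theta))$ or, after cubing, to $\theta=R(R(\theta^{3}))$, factoring the resulting fixed polynomial, discarding irreducible factors of even degree or of degree divisible by $3$ (using $m$ odd, $\gcd(m,3)=1$), and eliminating the surviving factors of degree $d\mid m$ by pinning down $h\bmod d$ from $2h\equiv\pm1\ (\mathrm{mod}\ d)$ and back-substituting into $\theta^{3^h}=R(\theta)$ --- this is precisely the mechanism of the paper's Lemmas \ref{Lemma-C2} and \ref{Lemma-C3}, transplanted to the new congruences. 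Two caveats: the coset-size step is slightly more than ``the same Euclidean reductions'' (for $e=3^h+5$ one gets $\gcd(e,3^m-1)\mid 22$ or $\mid 74$ depending on the sign, and must check that the primes $11$ and $37$ cannot occur); and your text defers all the factorizations and factor-by-factor eliminations, which in the paper's lemmas constitute the entire mathematical content, so what you have is a correct strategy rather than a proof.

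There is, however, a genuine failure in family 3, at the very step you call routine. For odd $m$ one has $3^m-1\equiv 2\ (\mathrm{mod}\ 4)$, so $\frac{3^m-1}{2}$ is odd, while $3^h+1$ is even; hence $e=\frac{3^m-1}{2}+3^h+1$ is \emph{odd} (the literal printed exponent $\frac{3^{m-1}}{2}$ is not even an integer, and the variant $3^{(m-1)/2}+3^h+1$ is odd as well, so no natural reading makes $e$ even). Parity is constant on a $3$-cyclotomic coset modulo the even number $3^m-1$, so no conjugate of $e$ is even either. Condition C1 of Theorem \ref{thm-fundamental} therefore fails, and this is not a technicality: $1+x^{(3^m-1)/2}$ is then a codeword of $\C_{(1,e)}$ of weight $2$, since $\alpha^{(3^m-1)/2}=-1$ and $(-1)^e=-1$ for odd $e$, so the code has minimum distance $2$, not $4$. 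Consequently your quadratic-character subcase machinery for family 3 never gets started; the correct response to this part of the statement is to flag the misprint and insist on an even representative (an exponent of the form $\frac{3^m-1}{2}+e'$ with $e'$ odd), not to assert that the evenness check passes for ``each admissible $e$''.
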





\begin{thebibliography}{99}


\bibitem{CDY05} C. Carlet, C. Ding, and J. Yuan, Linear codes from highly nonlinear functions and their
secret sharing schemes, IEEE Trans. Inform. Theory 51 (6) (2005) 2089--2102.

\bibitem{Delsarte} P. Delsarte, On subfield subcodes of modified Reed-Solomon codes,
IEEE Trans. Inform. Theory  21 (5) (1975) 575--576.

\bibitem{Ding12} C. Ding and J. Yang, Hamming weights in irreducible cyclic codes,
Discrete Mathematics 313 (4) (2013) 434--446.

\bibitem{Ding13} C. Ding and S. Ling, A $q$-polynomial approach to cyclic codes, Finite Fields Appl. 20 (3) (2013) 1--14.

\bibitem{DingSAM}C. Ding, Cyclic codes from some monomials and trinomials, SIAM J. Discrete Mathematics
27 (4) (2013) 1977--1994.

\bibitem{Ding-Heelseth} C. Ding and T. Helleseth, Optimal ternary cyclic codes from monomials, IEEE Trans. Inform. Theory 59 (9) (2013) 5898--5904.

\bibitem{DW05} C. Ding and X. Wang, A coding theory construction of new systematic authentication codes, {Theor. Comput. Sci.} 330 (1) (2005) 81--99.

\bibitem{DYT} C. Ding, Y. Yang, and X. Tang, Optimal sets of frequency hopping sequences from linear cyclic codes, {IEEE Trans. Inform. Theory} 56 (7) (2010) 3605--3612.

\bibitem{Dobb} H. Dobbertin, T. Helleseth, P. V. Kumar, and H. Martinsen, Ternary m-sequences with three-valued cross-correlation function: New decimations of Welch and Niho type, IEEE Trans. Inform.
Theory 47 (4) (2001) 1473--1481.

\bibitem{FLZ} C. Fan, N. Li, and Z. C. Zhou, A class of optimal ternary cyclic codes and their duals, Finite Fields Appl. 37 (2016) 193-202.

\bibitem{FL07} K. Feng and J. Luo, Value distribution of exponential sums from
perfect nonlinear functions and their applications,  IEEE Trans. Inform.
Theory 53 (9) (2007) 3035--3041.

\bibitem{TFeng} T. Feng, On cyclic codes of length $2^{2^r}-1$ with two zeros whose
dual codes have three weights, Des. Codes Cryptogr 62 (2012) 253--258.

\bibitem{Huff} W. C. Huffman and V. Pless, Fundamentals of Error-Correcting Codes, Cambridge University Press, Cambridge, 2003.

\bibitem{KL} D. J. Katz and P. Langevin, Proof of a conjectured three-valued family of Weil sums of binomials, Acta Arith. 169 (2) (2015) 181-199.

\bibitem{Klove} T. Kl{\o}ve, Codes for Error Detection, World Scientific, 2007.

\bibitem{Li-CJ}  C. J.  Li,  Q. Yue, and F. W.  Li, Weight distributions of cyclic codes with respect to pairwise coprime order elements, Finite Fields Appl. 28 (2014) 94--114.

\bibitem{Li-CL} C. L.  Li, N. Li, T. Helleseth, and C. Ding, The weight distributions of several classes of cyclic codes
from APN monomials, IEEE Trans. Inform. Theory 60 (8) (2014) 4710--4721.

\bibitem{Li-FW}  F. W.  Li, Q. Yue, and C. J.  Li, The minimum Hamming distances of irreducible cyclic codes,  Finite Fields Appl. 29 (2014) 225--242.

\bibitem{LiN} N. Li, C. Li, T. Helleseth, C. Ding, and X.H. Tang,
Optimal ternary cyclic codes with minimum distance four and five, Finite Fields Appl. 30 (2014) 100-120.

\bibitem{LZH} N. Li, Z. C. Zhou and T. Helleseth, On a conjecture about a class of optimal ternary cyclic codes, Seventh International workshop on Signal Design and its
Applications in Communications, 2015, 62-65.

\bibitem{Lint} J. H. van Lint, Introduction to Coding Theory, 3rd ed. Springer-Verlag, 1999.

\bibitem{Schmidt} B. Schmidt and C. White, All two-weight irreducible cyclic codes, Finite
Fields Appl. 8 (2002) 1--17.

\bibitem{WangLisha} L. S. Wang and G. F. Wu, Several classes of optimal ternary cyclic codes with minimal distance four, Finite Fields Appl. 40 (2016) 126-137.

\bibitem{Haode} H. D. Yan, Z. C. Zhou, and X. N. Du, A family of optimal ternary cyclic codes from the Niho-type exponent, Finite Fields Appl. 54 (2018) 101-112.

\bibitem{YangJing} J. Yang, M. Xiong, C. Ding, and J. Luo, Weight distribution of a class of cyclic codes with arbitrary number of zeros, IEEE Trans.Inform. Theory 59 (9) (2013) 5985--5993.

\bibitem{YCD} J. Yuan, C. Carlet, and C. Ding, The weight distribution of a class of linear codes from perfect nonlinear functions, IEEE Trans. Inform. Theory  52 (2) (2006) 712--717.


\bibitem{ZhengDB}D. B. Zheng, X. Q. Wang, H. Hu, and X. Zeng, The weight distributions of two classes of $p$-ary cyclic codes, Finite Fields Appl. 29 (2014) 202--242.

\bibitem{ZengXY2} X. Zeng, J. Shan,  and L. Hu, A triple-error-correcting cyclic code from the Gold and Kasami-Welch APN power functions, Finite Fields Appl. 16 (1) (2012) 70--92.

\bibitem{ZD13} Z. C. Zhou and C. Ding, A class of three-weight cyclic codes, Finite Fields Appl. 25 (2014) 79--93.

\bibitem{ZhouDingTC} Z. C. Zhou and C. Ding, Seven classes of three-weight cyclic codes, IEEE Trans. Commun.  61 (10) (2013) 4120--4126.





\end{thebibliography}
\end{document}